\documentclass[12pt,reqno]{amsart}

\usepackage{amssymb}
\usepackage[hypertex]{hyperref}

\oddsidemargin=5mm
\evensidemargin=5mm
\textwidth=150 true mm
\textheight=237 true mm
\voffset=-18 true mm

\begin{document}

\let\kappa=\varkappa
\let\epsilon=\varepsilon
\let\phi=\varphi
\let\p\partial

\def\Z{\mathbb Z}
\def\R{\mathbb R}
\def\C{\mathbb C}
\def\Q{\mathbb Q}
\def\P{\mathbb P}
\def\HH{\mathrm{H}}

\def\conj{\overline}
\def\Beta{\mathrm{B}}
\def\const{\mathrm{const}}
\def\ov{\overline}
\def\wt{\widetilde}
\def\wh{\widehat}

\renewcommand{\Im}{\mathop{\mathrm{Im}}\nolimits}
\renewcommand{\Re}{\mathop{\mathrm{Re}}\nolimits}
\newcommand{\codim}{\mathop{\mathrm{codim}}\nolimits}
\newcommand{\id}{\mathop{\mathrm{id}}\nolimits}
\newcommand{\Aut}{\mathop{\mathrm{Aut}}\nolimits}
\newcommand{\lk}{\mathop{\mathrm{lk}}\nolimits}
\newcommand{\sign}{\mathop{\mathrm{sign}}\nolimits}
\newcommand{\rk}{\mathop{\mathrm{rk}}\nolimits}
\def\Jet{{\mathcal J}}
\def\FC{{\mathrm{FCrit}}}
\def\sS{{\mathcal S}}
\def\lcan{\lambda_{\mathrm{can}}}
\def\ocan{\omega_{\mathrm{can}}}

\renewcommand{\mod}{\mathrel{\mathrm{mod}}}

\newtheorem{mainthm}{Theorem}
\renewcommand{\themainthm}{{\Alph{mainthm}}}
\newtheorem{thm}{Theorem}[subsection]
\newtheorem{lem}[thm]{Lemma}
\newtheorem{prop}[thm]{Proposition}
\newtheorem{cor}[thm]{Corollary}

\theoremstyle{definition}
\newtheorem{exm}[thm]{Example}
\newtheorem{rem}[thm]{Remark}
\newtheorem{df}[thm]{Definition}

\renewcommand{\thesubsection}{\arabic{subsection}}
\numberwithin{equation}{subsection}

\title{Cosmic censorship of smooth structures}
\author[Chernov \& Nemirovski]{Vladimir Chernov and Stefan Nemirovski}
\thanks{This work was partially supported by a grant from the Simons Foundation (\# 235674 to Vladimir Chernov).
The second author was supported by grants from DFG and RFBR}
\address{Department of Mathematics, 6188 Kemeny Hall,
Dartmouth College, Hanover, NH 03755, USA}
\email{Vladimir.Chernov@dartmouth.edu}
\address{%
Steklov Mathematical Institute, 119991 Moscow, Russia;\hfill\break
%\phantom{hh} \& \hfill\break
\strut\hspace{8 true pt} Mathematisches Institut, Ruhr-Universit\"at Bochum, 44780 Bochum, Germany}
\email{stefan@mi.ras.ru}

\begin{abstract}
It is observed that on many $4$-manifolds there is a unique
smooth structure underlying a globally hyperbolic Lorentz metric.
For instance, every contractible smooth $4$-manifold
admitting a globally hyperbolic Lorentz metric is diffeomorphic
to the standard~$\R^4$. Similarly, a smooth $4$-manifold
homeomorphic to the product of a closed oriented $3$-manifold $N$ and~$\R$
and admitting a globally hyperbolic Lorentz metric
is in fact diffeomorphic to $N\times \R$.
Thus one may speak of a censorship imposed by the global
hyperbolicty assumption on the possible smooth structures
on $(3+1)$-dimensional spacetimes.
\end{abstract}

\maketitle

\subsection*{Introduction}
One form of the {\it strong cosmic censorship hypothesis\/}
proposed by Penrose asserts that `physically relevant' spacetimes
should be globally hyperbolic (see~\cite{Penrose}).
The purpose of this note is to point out that global hyperbolicity
imposes strong restrictions on the {\it differential topology\/}
of the spacetime.
The starting point of all our considerations will be
the smooth splitting theorem for globally hyperbolic spacetimes
established by Bernal and S\'anchez~\cite{BernalSanchez, BernalSanchezMetricSplitting}.
All manifolds will be assumed Hausdorff and paracompact,
since Hausdorff spacetimes are necessarily paracompact by~\cite[pp.\,1743--1744]{Geroch}.

The first result is valid in all dimensions but seems to be
particularly interesting for $(3+1)$-dimensional spacetimes.
In that case, the argument makes essential use of the
three-dimensional Poincar\'e conjecture proved
by Perelman~\cite{Perelman1,Perelman2,Perelman3}.

\begin{mainthm}
\label{main}
Let $(X,g)$ be a globally hyperbolic $(n+1)$-dimensional spacetime.
Suppose that $X$ is contractible.
Then $X$ is diffeomorphic to the standard $\R^{n+1}$.
\end{mainthm}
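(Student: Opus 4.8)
The plan is to reduce the statement, via the Bernal--S\'anchez splitting, to a purely differential--topological fact about contractible open manifolds, and then to treat low, high and middle dimensions by different tools. By \cite{BernalSanchez,BernalSanchezMetricSplitting}, a globally hyperbolic spacetime is diffeomorphic to $\R\times\Sigma$ for a smooth spacelike Cauchy hypersurface $\Sigma$, and $X=\R\times\Sigma$ is homotopy equivalent to $\Sigma$; since $X$ is contractible, so is $\Sigma$. Thus it suffices to prove: \emph{if $\Sigma^{n}$ is a contractible smooth manifold without boundary (a point when $n=0$, and necessarily noncompact when $n\ge1$), then $\Sigma\times\R$ is diffeomorphic to the standard $\R^{n+1}$.} For $n=0$ this is trivial, and for $n=1,2$ the classification of $1$-manifolds and of surfaces gives $\Sigma\cong\R^{n}$ outright, hence $\Sigma\times\R\cong\R^{n+1}$.

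Assume now $n\ge2$. I would first note that $\Sigma$ has exactly one end --- for instance because Poincar\'e--Lefschetz duality gives $H^1_c(\Sigma)\cong H_{n-1}(\Sigma)=0$, and it is this group that controls the number of ends --- so that $\Sigma$ admits an exhaustion by compact connected smooth submanifolds $C_1\subset C_2\subset\cdots$ with each $C_i$ in the interior of $C_{i+1}$ and with $\Sigma\setminus C_i$ connected. Next I would check that $\Sigma\times\R$ is simply connected at infinity: given a large compact product region $C'\times[-a',a']$ with $\Sigma\setminus C'$ connected, its complement in $\Sigma\times\R$ is the union of the outer piece $(\Sigma\setminus C')\times\R$ with the two caps $\Sigma\times(a',\infty)$ and $\Sigma\times(-\infty,-a')$, and since each cap is contractible and meets the outer piece in a subset that deformation retracts onto it, van Kampen's theorem forces this complement to be simply connected. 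If in addition $n\ge4$, then $n+1\ge5$ and one may invoke Stallings' recognition of Euclidean space: a contractible open smooth manifold of dimension at least $5$ which is simply connected at infinity is diffeomorphic to the standard Euclidean space of that dimension. Together with the cases $n\le2$, this disposes of every dimension except $n=3$.

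The case $n=3$ is the crux, and the one in which the three-dimensional Poincar\'e conjecture \cite{Perelman1,Perelman2,Perelman3} is genuinely used. Here $X=\Sigma^{3}\times\R$ is a contractible open $4$-manifold which, by the previous paragraph, is still simply connected at infinity; but Stallings' theorem is unavailable in dimension $4$, and --- this being the entire point of the note --- a mere homeomorphism to $\R^4$ would not exclude an exotic smooth structure, so one needs a construction internal to the smooth category. Using the Poincar\'e conjecture together with the classical $3$-manifold toolkit (Dehn's lemma, the loop theorem and the sphere theorem) --- so that, in particular, $\Sigma$ is irreducible, every embedded $2$-sphere in it bounding a compact simply connected $3$-submanifold and hence a genuine ball --- one can choose the exhaustion above so that every $C_i$ is a handlebody and, after passing to a subsequence and using $\pi_1(\Sigma)=1$, so that each inclusion induces the trivial homomorphism $\pi_1(C_i)\to\pi_1(C_{i+1})$. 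Then $C_i\times[-1,1]$ is a smooth $4$-dimensional handlebody whose spine is a graph null-homotopic in $C_{i+1}\times[-2,2]$, and since a graph is $1$-dimensional while $1\le4-3$, an engulfing argument carried out smoothly should produce $4$-balls $B_i$ with $C_i\times[-1,1]$ in the interior of $B_i$ and $B_i$ in the interior of $C_{i+1}\times[-2,2]$. Then $X=\Sigma\times\R$ is a monotone union of open smooth $4$-balls and is therefore diffeomorphic to $\R^4$ by the smooth form of M.~Brown's monotone-union theorem. I expect the main obstacle to lie precisely in this engulfing step: in dimension $4$ the Whitney trick fails and $\pi_1(C_{i+1})$ need not be trivial, so one has to use in an essential way that the objects being swallowed are only $1$-dimensional and that the ambient manifold splits off an interval factor (this extra dimension is exactly what undoes the ``Whitehead clasping'' obstructing a ball exhaustion of $\Sigma$ itself); and, further upstream, in extracting the handlebody exhaustion, which is where the Poincar\'e conjecture does its real work.
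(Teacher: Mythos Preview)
Your proposal is correct and follows essentially the same route as the paper: reduce via Bernal--S\'anchez to the stabilization statement $\Sigma^n\times\R\cong\R^{n+1}$ for contractible $\Sigma$, handle $n\le 2$ by classification, $n\ge 4$ by Stallings' characterization of Euclidean space, and $n=3$ by McMillan's handlebody-exhaustion-plus-engulfing argument (relying on the Poincar\'e conjecture) followed by the monotone-union theorem. The one place where the paper is more explicit is that it carries out the $n=3$ engulfing in the PL category and then invokes Munkres' smoothing theorem to upgrade the resulting PL-homeomorphism $\Sigma\times\R\cong_{\mathrm{PL}}\R^4$ to a diffeomorphism, thereby sidestepping exactly the concern you flag about doing the four-dimensional engulfing smoothly.
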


For every $n\ge 3$, there exist uncountably many contractible
smooth $n$-manifolds that are not homeomorphic to $\R^n$
(see \cite{McMillan2}, \cite{Glaser} and~\cite{CurtisKwun}).
In dimension four, in addition to that there are uncountably
many smooth four-manifolds that are homeomorphic
but not diffeomorphic to $\R^4$
(the so-called {\it exotic\/} $\R^4$'s, see~\cite{Gompf} and~\cite{Taubes}).
The theorem shows that none of those carry globally
hyperbolic Lorentz metrics.

The topological argument used to prove Theorem~\ref{main}
in the $(3+1)$-dimensional case was first applied in the context
of Lorentz geometry by Newman and Clarke~\cite{NewmanClarke}.
They showed that a globally hyperbolic spacetime which {\it is\/}
diffeomorphic to $\R^4$ can have any contractible $3$-manifold
as its Cauchy surface, see Remark~\ref{NonStandardCauchy}.

Global hyperbolicity singles out `standard' smooth structures
on another large class of $(3+1)$-dimensional spacetimes as well.
The following result is based on Perelman's geometrization theorem for
$3$-manifolds and the work of Turaev~\cite{TuraevLong}.

\begin{mainthm}
\label{mainB}
Let $(X,g)$ be a globally hyperbolic $(3+1)$-dimensional spacetime.
Suppose that $X$ is homeomorphic to the product of a closed oriented
$3$-manifold $N$ and~$\R$.
Then $X$ is diffeomorphic to $N\times \R$, where $N$ and $\R$
have their unique smooth structures.
\end{mainthm}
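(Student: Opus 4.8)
The plan is to pass from the spacetime to its Cauchy hypersurface via the smooth splitting theorem and then settle the resulting question in $3$-manifold topology using geometrization and Turaev's torsion invariants. First I would invoke the Bernal--S\'anchez splitting theorem to fix a smooth spacelike Cauchy hypersurface $\Sigma\subset X$ together with a diffeomorphism $X\cong\Sigma\times\R$. (If $X$ is disconnected one argues componentwise, so assume $X$, and hence $N$ and $\Sigma$, connected.) Since $\Sigma\times\R\simeq\Sigma$ and $N\times\R\simeq N$ are both homotopy equivalent to $X$, we get $H_3(\Sigma;\Z)\cong H_3(N;\Z)\cong\Z$, and for a connected boundaryless $3$-manifold this forces $\Sigma$ to be closed and orientable. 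Thus $X$ carries a smooth model $\Sigma\times\R$ and a topological model $N\times\R$, so $\Sigma\times\R$ is homeomorphic to $N\times\R$. Because every topological $3$-manifold admits a unique smooth structure (Moise), it is enough to prove that $\Sigma$ is homeomorphic to $N$: this yields $\Sigma\cong_{\mathrm{diff}}N$, and hence $X\cong_{\mathrm{diff}}\Sigma\times\R\cong_{\mathrm{diff}}N\times\R$ with the standard product structure.

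The problem is thereby reduced to the purely topological assertion that closed oriented $3$-manifolds $M_1$ and $M_2$ with $M_1\times\R$ homeomorphic to $M_2\times\R$ are homeomorphic. The first, elementary, step is to exhibit $M_1$ and $M_2$ as $h$-cobordant. Given a homeomorphism $\phi\colon M_1\times\R\to M_2\times\R$, the image $\phi(M_1\times\{0\})$ is a compact bicollared copy of $M_1$ separating $M_2\times\R$; choosing a slice $M_2\times\{t\}$ far out on the appropriate side, the compact region $W$ lying between $\phi(M_1\times\{0\})$ and $M_2\times\{t\}$ is a cobordism from $M_1$ to $M_2$. Presenting $M_2\times(-\infty,t]$ as the union of $W$ with the closed region $\overline{\phi(M_1\times(-\infty,0))}\cong M_1\times(-\infty,0]$, glued along $\phi(M_1\times\{0\})$, and using that the inclusion of this hypersurface into the complementary region is a homotopy equivalence, a homotopy-pushout and Poincar\'e--Lefschetz duality argument shows that $W$ is an $h$-cobordism. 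In other words, a homeomorphism $M_1\times\R\cong M_2\times\R$ exhibits $M_1$ and $M_2$ as $h$-cobordant, and conversely.

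It remains to show that $h$-cobordant closed oriented $3$-manifolds are homeomorphic; this is where Perelman's geometrization theorem and Turaev's work are essential, and I expect it to be the main obstacle. An $h$-cobordism yields in particular a homotopy equivalence $M_1\simeq M_2$, so by the Kneser--Milnor prime decomposition together with Grushko's theorem the prime summands of $M_1$ and $M_2$ are matched up to homotopy equivalence. For the $S^2\times S^1$ summands and the aspherical prime summands homotopy equivalence already implies homeomorphism: $S^2\times S^1$ is the only orientable prime $3$-manifold in its homotopy type, and geometrization delivers the rigidity for the aspherical ones (Mostow rigidity in the hyperbolic case, Waldhausen in the Haken case, and the general Borel property for closed aspherical $3$-manifolds). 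The delicate case is that of the spherical space form summands, where homotopy equivalence is strictly weaker than homeomorphism --- the standard example being the homotopy equivalent but non-homeomorphic lens spaces $L(7,1)$ and $L(7,2)$ --- so one must rule out $h$-cobordisms between such manifolds. This comes down to a torsion computation for the finite fundamental groups of spherical space forms: via the duality relating $\tau(W,M_2)$ to the conjugate of $\tau(W,M_1)$ in $\mathrm{Wh}(\pi_1)$, an $h$-cobordism would impose constraints on the induced homotopy equivalence that are incompatible with Turaev's classification of these manifolds by Reidemeister--Turaev torsion. Granting this torsion bookkeeping --- which is exactly what the cited work of Turaev supplies --- every prime summand of $M_1$ is homeomorphic to the corresponding summand of $M_2$, hence $M_1\cong M_2$, and the theorem follows.
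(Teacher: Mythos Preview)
Your proposal is correct and follows the same route as the paper: Bernal--S\'anchez splitting, the $H_3$ computation showing the Cauchy surface is closed and orientable, extraction of a topological $h$-cobordism from a homeomorphism $\Sigma\times\R\cong N\times\R$, and then Turaev's theorem (made unconditional by Perelman's geometrization) that $h$-cobordant closed orientable $3$-manifolds are homeomorphic. The only differences are cosmetic: the paper verifies the $h$-cobordism property by exhibiting an explicit strong deformation retraction $r_N\circ\psi\circ r_M\circ\psi^{-1}$ of $N\times\R$ onto $W$ rather than via a pushout/duality argument, and it cites Turaev's result \cite[Theorem~1.4]{TuraevLong} as a black box rather than unpacking it into the prime-decomposition and aspherical/spherical case analysis you outline.
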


In fact, we do not know an example of a topological $4$-manifold
admitting two non-diffeomorphic smooth structures each of which
is the smooth structure of a globally hyperbolic spacetime.
However, such manifolds exist in higher dimensions
(for instance, $S^7\times\R$). To show that $4$-dimensional
examples do not exist, one would need to prove Theorem~\ref{mainB}
for a $3$-manifold~$N$ that may be non-compact or
non-orientable.

\subsection{Globally hyperbolic spacetimes}
A {\it spacetime\/} is a time-oriented connected Lorentz manifold~$(X,g)$.
The Lorentz metric $g$ and the time-orientation define a distribution
of future hemicones in $TX$. A piecewise-smooth curve in $X$ is called
future-pointing if its tangent vectors lie in the future hemicones.

For two points $x,y\in X$, we write $x\le y$ if either $x=y$ or there exists
a future-pointing curve connecting $x$ to $y$. A spacetime
is called {\it causal\/} if $\le$ defines a partial order on it,
that is, if there are no closed non-trivial future-pointing
curves.

A spacetime $(X,g)$ is {\it globally hyperbolic\/} if it is
causal and the `causal segments' $I_{x,y}=\{z\in X\mid x\le z\le y\}$
are compact for all $x,y\in X$. (This definition is equivalent
to the classical one~\cite[\S 6.6]{HawkingEllis}
by~\cite[Theorem 3.2]{BernalSanchezCausal}.)

A {\it Cauchy surface\/} in a spacetime is a subset such that every
endless future-pointing curve meets it exactly once.
It is a classical fact~\cite[pp.\,211--212]{HawkingEllis} that
a spacetime is globally hyperbolic if and only if it contains a Cauchy
surface. It has long been conjectured (and sometimes tacitly assumed)
that Cauchy surfaces can be chosen to be smooth and spacelike
and that a globally hyperbolic spacetime must be diffeomorphic
to the product of its Cauchy surface with $\R$;
this was finally proved by Bernal and S\'anchez in 2003.

\begin{thm}[Bernal--S\'anchez \cite{BernalSanchez, BernalSanchezMetricSplitting}]
\label{Splitting}
For a globally hyperbolic $(n+1)$-dimensional spacetime $(X,g)$,
there exist an $n$-dimensional smooth manifold $M$ and a diffeomorphism
$h:M\times \R\to X$ such that
\begin{itemize}
\item[a)] $h(M\times \{t\})$ is a smooth spacelike Cauchy surface for all $t\in\R${\rm ;}
\item[b)] $h(\{x\}\times \R)$ is a future-pointing timelike curve for all $x\in M$.
\end{itemize}
\end{thm}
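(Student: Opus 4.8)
The plan is to reduce the splitting to the existence of a \emph{Cauchy temporal function}: a smooth $\tau\colon X\to\R$ whose differential $d\tau$ is timelike at every point, which increases along every future-pointing causal curve, and each level set of which is a Cauchy surface. Granting such a $\tau$, put $M=\tau^{-1}(0)$; this is a smooth hypersurface (as $d\tau$ never vanishes) and is spacelike because $d\tau$ is timelike. Let $W$ be the $g$-gradient of $\tau$ rescaled so that $d\tau(W)\equiv1$; then $W$ is timelike because $d\tau$ is, and future-pointing because $d\tau(W)>0$, so the integral curves of $W$ are future-pointing timelike curves. First I would check that $W$ is complete: a maximal integral curve $\gamma$ of $W$ satisfies $\tau(\gamma(s))=\tau(\gamma(0))+s$ and, by maximality and local existence of integral curves, has no endpoint at a finite end of its parameter interval, hence is an endless curve; being future-pointing timelike it meets every Cauchy surface $\tau^{-1}(c)$, so $\tau\circ\gamma$ is onto $\R$ and the parameter interval of $\gamma$ is all of $\R$. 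Writing $\Phi_s$ for the resulting globally defined flow, $h(x,s):=\Phi_s(x)$ is a smooth bijection $M\times\R\to X$ with smooth inverse $y\mapsto\bigl(\Phi_{-\tau(y)}(y),\tau(y)\bigr)$, hence a diffeomorphism; part~a) holds since $h(M\times\{s\})=\tau^{-1}(s)$ is a smooth spacelike Cauchy surface, and part~b) since $h(\{x\}\times\R)$ is an integral curve of the future-pointing timelike field $W$.

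All the real content therefore lies in constructing a Cauchy temporal function. The starting point is the classical theorem of Geroch: for a finite Borel measure $\mu$ on $X$, the logarithm of the ratio of the $\mu$-volumes of $J^-(\cdot)=\{z\mid z\le\cdot\}$ and $J^+(\cdot)=\{z\mid \cdot\le z\}$ is a continuous time function $t_0$ each level set of which is a topological Cauchy surface. The trouble is that $t_0$ is merely continuous and its level sets are only Lipschitz hypersurfaces, so no smooth product structure can be read off directly; and naive mollification of $t_0$ is useless, as it ignores the lightcones and may turn level sets timelike or destroy the Cauchy property.

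The construction I would carry out has two stages. In the local stage, one shows that near any Cauchy surface $S$ there are smooth temporal functions whose gradient is as steep as one wishes in the direction transverse to $S$: in normal coordinates one arranges a timelike gradient by hand, then patches with a partition of unity subordinate to a locally finite cover adapted to the causal cones -- the crucial point being that a locally finite sum of smooth functions with future-pointing timelike gradients again has future-pointing timelike gradient, so co-orientation survives the patching. In the global stage, one applies this to the Cauchy surfaces $S_k=t_0^{-1}(k)$, $k\in\Z$, producing steep temporal functions concentrated near the $S_k$, and forms their sum $\tau$ (with cutoffs so that the sum is locally finite, smooth, and has timelike gradient). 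Steepness is then used to trap each level set of $\tau$ between consecutive $S_k$'s; since $t_0$ is Cauchy, every endless timelike curve crosses every $S_k$, so $\tau$ runs over all of $\R$ along it, which forces each level set of $\tau$ to be a Cauchy surface, as required.

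The main obstacle is precisely this passage from Geroch's continuous time function to a smooth temporal function with Cauchy level sets. The hard part is not the local smoothing but reconciling the purely local control available in a single chart with the two global demands -- properness of $\tau$ onto $\R$ and Cauchyness of \emph{every} level set -- and carrying out the quantitative estimates that make the assembled sum have an everywhere timelike gradient. For those estimates I would follow the analysis of Bernal and S\'anchez in~\cite{BernalSanchez, BernalSanchezMetricSplitting}.
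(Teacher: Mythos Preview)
The paper does not give its own proof of this theorem: it is stated as the result of Bernal and S\'anchez and cited to~\cite{BernalSanchez, BernalSanchezMetricSplitting}, then used as a black box in the proofs of Theorems~\ref{main} and~\ref{mainB}. So there is nothing to compare your argument against within the paper itself.

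That said, what you have written is a faithful outline of the actual Bernal--S\'anchez proof. The reduction to a Cauchy temporal function and the flow argument in your first paragraph are exactly how the diffeomorphism is produced in~\cite{BernalSanchezMetricSplitting}; your completeness argument for $W$ is the standard one (the key point being that an inextendible causal curve must meet every Cauchy level set, forcing $\tau\circ\gamma$ onto all of~$\R$). Your second and third paragraphs correctly identify Geroch's continuous time function as the starting point and the local-to-global smoothing, with convexity of the future timelike cone making sums of future-directed timelike gradients again timelike, as the mechanism for upgrading it. You are also right that the delicate part is the quantitative control ensuring that the assembled $\tau$ is surjective with Cauchy fibres, and you appropriately defer those estimates to~\cite{BernalSanchez, BernalSanchezMetricSplitting}. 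For the purposes of this paper, simply citing the theorem---as the authors do---is the intended move.
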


Note that it follows by projecting along the timelike $t$-direction
that {\it all\/} smooth spacelike Cauchy surfaces in $(X,g)$ are diffeomorphic
to the same manifold~$M$.

\subsection{Proof of Theorem~\ref{main}}
\label{ProofMain}
Suppose that $(X,g)$ is globally hyperbolic and $X$ is contractible.
By Theorem~\ref{Splitting},
we know that $X$ is diffeomorphic to the product $M\times\R$
for a smooth $n$-manifold $M$. Since $X$ is contractible, it follows
that $M$ is also contractible (as it is homotopy equivalent to $X$).
Thus, it remains to invoke the following result.

\begin{prop}[McMillan~\cite{McMillan,McMillanZeeman}, Stallings~\cite{Stallings}]
\label{Stab}
Suppose that $M$ is a contractible smooth $n$-manifold.
Then $M\times\R$ is diffeomorphic to $\R^{n+1}$.
\end{prop}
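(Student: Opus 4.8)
The plan is to reduce Proposition~\ref{Stab} to the statement that $M\times\R$ is \emph{simply connected at infinity}, and then to feed that into an engulfing argument. A contractible manifold without boundary is non-compact, and for $n\le 2$ the classification of low-dimensional manifolds forces $M\cong\R^n$, so that $M\times\R\cong\R^{n+1}$ at once; hence I may assume $n\ge 3$. Since $M$ is contractible of dimension $\ge 2$ it has a single end (standard: by Poincar\'e--Lefschetz duality $H^1_c(M)\cong H_{n-1}(M)=0$), so it carries an exhaustion $K_1\subset K_2\subset\cdots$ by connected compact codimension-zero submanifolds whose complements $M\setminus K_j$ are connected.

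The first step is the lemma: \emph{if $M$ is contractible and $\dim M\ge 2$, then $M\times\R$ is simply connected at infinity.} The sets $V_j=(M\times\R)\setminus\bigl(K_j\times[-j,j]\bigr)$ form a cofinal family of complements of compacta, and each $V_j$ is the union of the open sets $U_1=(M\setminus K_j)\times\R$, $U_2^+=M\times(j,\infty)$ and $U_2^-=M\times(-\infty,-j)$. Here $U_2^\pm\simeq M$ are simply connected, $U_1$ is connected, and the inclusion $U_1\cap U_2^+=(M\setminus K_j)\times(j,\infty)\hookrightarrow U_1$ is a homotopy equivalence. By van Kampen's theorem, $\pi_1(V_j)$ is the quotient of $\pi_1(U_1)$ by the normal subgroup generated by the images of $\pi_1(U_1\cap U_2^\pm)$; as the first of these images is already all of $\pi_1(U_1)$, we conclude $\pi_1(V_j)=1$, which proves the lemma.

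For $n\ge 4$ this finishes the proof: $M\times\R$ is then a contractible smooth manifold of dimension $n+1\ge 5$ that is simply connected at infinity, so by Stallings' engulfing theorem~\cite{Stallings} its underlying piecewise-linear manifold is PL-isomorphic to $\R^{n+1}$; since $\R^m$ admits a unique smooth structure for every $m\ne 4$, this upgrades to a diffeomorphism $M\times\R\cong\R^{n+1}$.

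The remaining case $n=3$ is the one I expect to be the main obstacle, and it is where the three-dimensional Poincar\'e conjecture is used: Stallings' method requires dimension $\ge 5$, and --- unlike $\R^m$ for $m\ne 4$ --- the standard smooth structure on $\R^4$ is not singled out topologically, so one must argue with three-dimensional topology directly. In outline, one writes the contractible open $3$-manifold $M$ as an increasing union of cubes-with-handles $H_1\subset H_2\subset\cdots$; since $M$ is contractible, the inclusion $H_i\hookrightarrow M$ is null-homotopic, and a null-homotopy, having compact image, is already a null-homotopy into some $H_N$, so after passing to a subsequence each $H_i\hookrightarrow H_{i+1}$ is null-homotopic. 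Such an inclusion need not bound an embedded $3$-disc --- this is exactly the Whitehead phenomenon, which is why $M$ itself need not be $\R^3$ --- but crossing with $\R$ supplies the extra room needed to engulf each $H_i\times[-i,i]$ in a smoothly embedded $4$-ball inside $H_{i+1}\times\R$, and Perelman's solution of the Poincar\'e conjecture~\cite{Perelman1,Perelman2,Perelman3} enters in recognizing the relevant simply-connected compact $3$-manifolds as standard, so that $M\times\R$ is exhausted by standardly nested $4$-balls and is therefore diffeomorphic to $\R^4$. This is essentially the argument of McMillan and Zeeman~\cite{McMillan,McMillanZeeman}; performing the engulfing carefully enough to stay in the smooth category, and verifying the nesting of the balls, is where the real work lies.
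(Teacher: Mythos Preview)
Your outline is correct and follows essentially the same route as the paper: a case split by dimension, Stallings' engulfing characterisation of $\R^m$ for $n\ge 4$, and the McMillan--Zeeman argument for $n=3$. Your explicit van~Kampen computation showing that $M\times\R$ is simply connected at infinity is exactly the ingredient that feeds into Stallings' theorem; the paper simply cites \cite[Corollary~5.3]{Stallings}, which already packages this product statement.

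Two points where the paper is sharper than your sketch. First, in the $n=3$ case the paper does \emph{not} try to stay in the smooth category during the engulfing. It runs McMillan's argument in PL to conclude that $M\times\R$ is PL-homeomorphic to $\R^4$, and then invokes Munkres~\cite[Corollary~6.6]{Munkres} (a smooth manifold PL-homeomorphic to $\R^n$ is diffeomorphic to $\R^n$) to upgrade. Your proposal to ``perform the engulfing carefully enough to stay in the smooth category'' is not how the literature handles it and would require substantial extra work; the PL-then-Munkres route is both cleaner and already written down. Second, you slightly misplace the use of the Poincar\'e conjecture: it is needed already in the first step, to produce the exhaustion of $M$ by handlebodies (McMillan~\cite[Theorem~1]{McMillan}), where one must know that a null-homotopic embedded $2$-sphere bounds a genuine $3$-ball rather than merely a homotopy ball. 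It is not used later in the four-dimensional engulfing.
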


\begin{proof}
The proof splits into three cases according to the dimension of $M$.

\smallskip
\noindent
1. $\dim M\le 2$.

\smallskip
\noindent
The result is obvious because the only contractible manifolds of dimension $\le 2$ are $\R$ and $\R^2$.

\smallskip
\noindent
2. $\dim M=3$ (cf.~{\cite[p.\,55]{NewmanClarke}}).

\smallskip
\noindent
We outline McMillan's argument~\cite{McMillan,McMillanZeeman}
trying to give precise references for each step. For an introduction
to the relevant topological methods, the reader may consult the book
by Rourke and Sanderson~\cite{RourkeSanderson}.
McMillan \cite[Theorem 1]{McMillan} proved
that {\bf if the three-dimensional Poincar\'e conjecture holds true},
then $M$ can be exhausted by compact subsets PL-homeo\-morphic
to handlebodies with handles of index one.
It follows by an engulfing argument~\cite[Proof of Theorem 2, p.\,513]{McMillan}
that $M\times\R$ is the union of compact subsets
$B_n\subset M\times\R$ such that $B_n\subset \mathop{\mathrm{Int}} B_{n+1}$
and each $B_n$ is $\mathrm{PL}$-homeomorphic to the $4$-ball.
McMillan and Zeeman observed~\cite[Lemma~4]{McMillanZeeman}
that this implies that $M\times\R$ is $\mathrm{PL}$-homeomorphic to~$\R^4$.
However, if a smooth manifold is $\mathrm{PL}$-homeomorphic to $\R^n$,
then it is diffeomorphic to $\R^n$ by a result of Munkres \cite[Corollary 6.6]{Munkres}.
Since the Poincar\'e conjecture is now known to be true
because of Perelman's work~\cite{Perelman1,Perelman2,Perelman3},
the result follows.

\smallskip
\noindent
3. $\dim M\ge 4$.

\smallskip
\noindent
This is a special case of a result of Stallings~\cite[Corollary~5.3]{Stallings}.
\end{proof}

\begin{rem}[The r\^{o}le of the Poincar\'e conjecture]
The three dimensional Poincar\'e conjecture enters the preceding
argument in the case $n=3$ through the proof of \cite[Theorem 1]{McMillan}.
It is used there in the form of the following statement: {\it A null-homotopic
embedded\/ $2$-sphere in a three-manifold bounds a $3$-ball.}
The assertion that such a sphere bounds a {\it homotopy\/} ball
is classical and `elementary' (see e.\,g.~\cite[Proposition~3.10]{Hatcher});
the Poincar\'e conjecture ensures that the only homotopy $3$-ball
is the usual one.
\end{rem}

\begin{rem}[Standard spacetimes vs non-standard Cauchy surfaces]
\label{NonStandardCauchy}
Following Newman and Clarke~\cite{NewmanClarke}, let us show that
although the underlying manifolds of contractible globally hyperbolic
spacetimes are standard, their Cauchy surfaces can be completely
arbitrary:
{\it For every contractible smooth $n$-manifold $M$, there exists a globally
hyperbolic spacetime of the form $(\R^{n+1},g)$ with Cauchy surface
diffeomorphic to $M$.} Indeed, take any complete Riemann metric $\bar g$
on $M$, then $(M\times\R, \bar g\oplus -dt^2)$ is a globally hyperbolic
spacetime. By Proposition~\ref{Stab} the manifold $M\times\R$
is diffeomorphic to $\R^{n+1}$.
\end{rem}

\subsection{Proof of Theorem~\ref{mainB}}
The manifold $X$ is diffeomorphic to $M\times\R$ for some $3$-manifold $M$
by Theorem~\ref{Splitting}. We shall prove that $M$ is in fact homeomorphic
to $N$. Since homeomorphic $3$-manifolds are diffeomorphic~\cite[Theorem~3.6]{Munkres},
it will follow that the smooth $4$-manifolds $X\overset{\mathrm{diff}}{=}M\times\R$
and $N\times\R$ are diffeomorphic.

Note first that
$$
H_3(M,\Z)=H_3(M\times\R,\Z)=H_3(X,\Z)=H_3(N\times\R,\Z)=H_3(N,\Z)=\Z
$$
and hence $M$ is closed and orientable.

Turaev~\cite[Theorem 1.4, p.\,293]{TuraevLong} proved that two orientable
closed {\bf geometric} $3$-manifolds are homeomorphic if and only
if they are topologically $h$-cobordant. In~\cite[p.\,291]{TuraevLong}
geometric $3$-manifolds were defined as connected sums of
Seifert fibred, hyperbolic, and Haken manifolds.
It is now known by the work of Perelman~\cite{Perelman1, Perelman2}
that a non-Haken (and hence atoroidal) irreducible orientable
closed $3$-manifold is either Seifert fibred (which includes
all spherical $3$-manifolds~\cite[p.\,248]{Thurston}) or hyperbolic,
see e.\,g.~\cite[Theorem 1.1.6]{BBBMP}.
Thus, all closed orientable $3$-manifolds are geometric in the sense of~\cite{TuraevLong}.

It remains to construct a topological $h$-cobordism between $N$ and $M$.
Let $\psi: M\times\R\to N\times\R$ be a homeomorphism.
Since $\psi(M\times\{0\})$ is compact, it is contained
in $N\times (-\infty, T)$ for some $T\gg 0$.
Reversing the $\R$-factor in $M\times\R$ if necessary,
we may assume that $N\times\{T\}\subset \psi(M\times (0,+\infty))$.
Set
$$
W=N\times (-\infty, T]\cap \psi(M\times [0,+\infty))\subset N\times\R.
$$
This is a compact topological manifold with boundary
that defines a topological cobordism between
$M\overset{\mathrm{top}}{=}\psi(M\times\{0\})$
and $N=N\times\{T\}$. By the definition of an $h$-cobordism,
we have to check now that the inclusions of the boundary
components into $W$ are homotopy equivalences.

Let $r_M:M\times\R\twoheadrightarrow M\times [0,+\infty)$ and
$r_N:N\times\R\twoheadrightarrow N\times (-\infty, T]$ be the
obvious strong deformation retractions. Then
$$
r_N\circ \psi \circ r_M\circ \psi^{-1}: N\times\R \twoheadrightarrow W
$$
is a strong deformation retraction. Hence, the inclusion
$W\hookrightarrow N\times\R$ is a homotopy equivalence.
Since the inclusions $N\times \{T\}\hookrightarrow N\times\R$
and $\psi(M\times\{0\})\hookrightarrow N\times\R$ are
also homotopy equivalences, it follows that $W$ is
a topological $h$-cobordism indeed,
which completes the proof of Theorem~\ref{mainB}.

\end{document}